\newtheorem{Definition}{Definition}
\newcommand{\R}{\mathbb{R}}
\newtheorem{remark}{Remark}[section]  
\newtheorem{example}{Example}[section]  
\newtheorem{proposition}{Proposition}[section]
\journal{}
\begin{document}

\begin{frontmatter}



\title{Exact Solutions for Classes of Nonlinear Differential Equations on Fractal Supports.} 


\author[1]{Donatella Bongiorno\corref{mycorrespondingauthor}}
\cortext[mycorrespondingauthor]{Corresponding author}
\ead{donatella.bongiorno@unipa.it}

\author[2]{Alireza Khalili Golmankhaneh}
\ead{alireza.khalili@iau.ac.ir}

\address[1]{Dipartimento di Ingegneria, Università di Palermo,
Viale delle Scienze, Ed. 8, 90128 Palermo, Italy}

\address[2]{Department of Physics, Ur. C., Islamic Azad University,
Urmia 63896, West Azerbaijan, Iran}

\begin{abstract}
In this paper, the exact solutions of certain non-linear differential equations defined on a fractal subset of the real line are presented. Particular attention is paid to the Riccati-type fractal differential equation, for which a connection with the Schrödinger equation is also provided.
\end{abstract}



\begin{keyword}
fractal set \sep exact solution \sep non linear fractal differential equations \sep 
Riccati-type fractal differential equations.


  \MSC[2008] 28A80 \sep  34A30

\end{keyword}

\end{frontmatter}



\section{Introduction}

The study of non-linear differential equations (NDE) is a cornerstone of mathematical physics and engineering, providing essential models for complex phenomena across numerous disciplines, from fluid dynamics and quantum mechanics to biology and finance \cite{Coddington, Verulst, edwards2000differential}.
 It is known that powerful analytical and numerical techniques exist for NDEs defined on standard Euclidean domains. Among the non-linear ordinary differential equations, the Riccati differential equation is a notable example. This equation, with quadratic right-hand sides, is closely linked to the calculus of variations and optimal control theory, playing a key role in the optimal control of complex networks \cite{bellon2008riccati,Martin,Pavon}.
Despite these advances, a significant challenge arises when the underlying domain possesses a non-integer, or fractal, dimension.

This challenge is central given the relevance of fractal geometry in modeling and understanding natural phenomena—such as blood vessels, coastlines, mountains, and clouds—as highlighted in the classic exposition by B. Mandelbrot \cite{Mandelbro}. These fractal structures exhibit unique features, including self-similarity and fractal dimensions greater than their topological dimensions, which distinguish them from traditional Euclidean objects. Consequently, conventional metrics (e.g., length, surface area, and volume) typically applied to Euclidean forms have proven insufficient for analyzing the properties of analytic functions defined on a fractal set or on a fractal curve \cite{Qaswet,falconer1999techniques,feder2013fractals}.

To address this, mathematical methods extending beyond classical analysis, such as harmonic analysis \cite{kigami1989harmonic, kigami2001analysis, jorgensen2006analysis}, measure theory \cite{jiang1998some, bongiorno2015integral}, stochastic processes \cite{Barlowqq1} and fractional calculus \cite{valarmathi2023variable}, were developed. Specifically, a method known as fractal calculus ($F^\alpha$-calculus) was formulated for fractal subsets of the real line \cite{parvate2009calculus} and fractal curves \cite{parvate2011calculus}, offering a framework highly similar to the classical one.

In this paper, we adopt the $F^\alpha$-calculus framework, initially introduced in \cite{parvate2009calculus} and later refined by A.K.G. in \cite{Alireza-book}. The parameter $\alpha$, denoting the fractal dimension, is proven to coincide with the well-known Hausdorff dimension when the fractal set $F$ is compact. Central to this formulation is the staircase function, which generalizes the Cantor staircase function and is the key to defining the fractal integral and derivative.

The application of such calculus has led to significant recent progress, including the development of new methods for solving fractal differential equations (FDEs) \cite{golmankhaneh2023solving, khalili2024fractal} and systems of FDEs \cite{golmankhaneh2025homogeneous,khalili2025fractal77}. These models have proven effective in simulating processes with memory, modeling of fractal integral equations via Volterra fractal operator and describing power-law behavior in complex systems such as sub- and super-diffusion \cite{khalili2024fractaldd, Uc, golmankhaneh2018sub}.

In this paper, we show how the $F^\alpha$-calculus offers new avenues for finding exact solutions to some type of non-linear FDEs on fractal supports.

Precisely, the Section \ref{S-2} prepares the reader for the subsequent material by reiterating and adapting several crucial definitions derived from the paper of A. Parvate and A. D. Gangal \cite{parvate2009calculus}, tailoring them to our specific computational environment. The accompanying remarks underline the mechanism by which fractal calculus extends the concepts of ordinary calculus. A definition of fractal-primitive and its characterization is also provided. In Section \ref{S-3} we focus on the resolution of non-linear fractal differential equations of the form $D^\alpha_F y(t)=g(\phi(y,t))$, where $g$ is an $F$-continuous function on a fractal set $F$ and $\phi(y,t)$ is a given function. In Section \ref{S-4} we demonstrate how $F^\alpha$-calculus can be applied to derive exact solutions for Riccati-type fractal differential equations (RFDE). This is highly significant because the design of regulators for complex networks, which are often modeled using fractal structures, typically relies on the Riccati equation. By demonstrating how to derive exact solutions for RFDEs, this paper opens a new research area for designing optimal regulators that explicitly account for the underlying fractal topology of the network in resource allocation and stability analysis.
Furthermore, in Section \ref{S-5}, we show the fractal Riccati formulation of the Schrödinger equation, highlighting through significant examples how the RFDE plays an important role in quantum mechanics as well. Our model opens a new research line in a topic that in recent years has prompted a widespread and still growing interest: the Nonlinear Schr\"{o}dinger Equation (NLS) on non-standard domains. It will thus be possible to study models where dispersion and nonlinearity effects in fractal domains are balanced, such as describing the propagation of light pulses and optical solitons in optical fibers, Bose-Einstein Condensates, and the physics of water waves. Finally, Section \ref{S-6} concludes the paper by suggesting directions for future research.

\section{Preliminary\label{S-2}}
In this section, we provide a brief overview of $F^{\alpha}$-calculus, based on some definitions given in \cite{ parvate2009calculus} and in \cite{Alireza-book}, here suitably restated. 
Throughout all the paper we denote by \( F\)  a compact\ \(\alpha\)-perfect fractal subset of the real line, where
 \( \alpha \in (0,1] \) is its  fractal dimension. Moreover denoted by $[a,b]$ an interval of the real line, we assume that $F \cap [a,b]\neq \emptyset$.
 
\begin{Definition}\label{ex} 

Let $0<\alpha \leq 1$. 
The $\alpha$-\textit{dimensional Hausdorff measure} of a subset $A$ of the real line is defined as:
$$\mathcal{H}^{\alpha}(A)=\lim_{\delta\rightarrow 0}\inf\left\lbrace{\sum_{i=1}^{\infty}(\mbox{diam}(A_{i}))^{\alpha}:A\subset \bigcup_{i=1}^{\infty}A_{i},\, \mbox{diam}(A_{i})\leq\delta}\right\rbrace.$$

Moreover, the unique number $\alpha$ for which $\mathcal{H}^{t}(A)=0\,$ if $\, t>\alpha\,$ and $\,\mathcal{H}^{t}(A)=\infty\,$ if $\, t<\alpha\,$ is called the fractal (Hausdorff) dimension of $A$.
\end{Definition}

\begin{Definition}
Let $[a,b]$ be an interval of the real line and let $t\in F \cap [a,b].$
The Staircase function associated with the fractal set $F$ of order $\alpha$ is defined by 

$$S^{\alpha}_F(t)\,=\left \{ 
 \begin{array}{l}
\Gamma(\alpha+1)\mathcal{H}^{\alpha}(F\cap [p,t]), \ \ \ \ \ \ \ \ \ \ \ \textrm{if}\ \ t \geq p \\
\\
- \Gamma(\alpha+1)\mathcal{H}^{\alpha}(F\cap [t,p])\ \ \ \ \ \ \ \ \ \  \textrm{if}\ \  t<p.
\end{array}
\right.$$

\medskip

 where $p\in [a,b]$ is fixed and $\Gamma (\cdot)$ is the well known gamma function.  
  \end{Definition}

\begin{Definition}
Let $[a,b]$ be an interval of the real line. Let $t\in F \cap [a,b]$ and let $r\in \R^+.$
A fractal neighborhood with center $t$ and radius $r$ is the fractal interval of the form $V_F(t,r)= (t-r,t+r)\cap (F\cap [a,b]).$
\end{Definition}
 
\begin{Definition}
Let $f:[a,b]\to\R$.  We say that the function $f$ is $F$-continuous at a point $t\in F\cap [a,b],$
if for every fractal neighborhood $V_F$ of $f(t)$ there exists a fractal neighborhood $W_F$ of $t$ such that $f(t)\in V_F$ whenever $t\in W_F.$  In other words $f$ is $F$-continuous at a point $t\in F$ if 
the following fractal limit  exists  $$\underset{ y\rightarrow t}{F_{-}\text{lim}}~f(y)\,=\,f(t).$$ Whenever $f$ is $F$-continuous at every point of $F\cap [a,b],$ then $f$ is called a $F$-continuous function. The set of all such functions is denoted by $ \mathcal{C}(F\cap [a,b]).$

\end{Definition}

\begin{remark}
 Note that the previous definition does not involve values of the function $f$ at a point $y$ if $y\notin F\cap [a,b].$
 Therefore, the notion of $F$-continuity is a generalization of the classical notion of continuity. 
\end{remark}

\begin{Definition}
Let $f:[a,b]\to\R$.
The fractal derivative of a function $f$ at a point $t\in [a,b]$ is defined as:
\begin{equation}
D_F^\alpha f(t) = \left \{\begin{array}{ll}
 \underset{ y\rightarrow t}{F_{-}\text{lim}}~ \frac{f(y) - f(t)}{S_F^\alpha(y) - S_F^\alpha(t)},    & t\in F\cap [a,b] \\
   0,  & otherwise
\end{array}\right.
\end{equation}

\bigskip
where \( S_F^\alpha(t) \) is the Staircase function of order \(\alpha\) defined for the set \( F \).  

Whenever $f$ has a fractal derivative at every point $t\in [a,b]$ we say that $f$ is $F^{\alpha}$-derivable on $[a,b]$.
\end{Definition}

\begin{proposition} \label{pp1}
Let $f:[a,b]\to \R$ and $g:[a,b]\to\R$ be two real functions that are  $F^{\alpha}$-derivable on each point $t\in [a,b]$. Then, we have:
$$D^{\alpha}_F(f+g)(t)= D_F^\alpha f(t)\,+\,D_F^\alpha g(t), $$
     $$D_F^\alpha (fg)(t)=f(t)D_F^\alpha g(t)+g(t)D_F^\alpha f(t), $$
$$D_F^\alpha \left (\frac{f}{g}\right)(t)=\frac{g(t)D_F^\alpha f(t)-f(t)D_F^\alpha g(t)}{g^2(t)}. $$
  
\end{proposition}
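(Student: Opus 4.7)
The plan is to mirror the elementary proofs of the sum, product, and quotient rules, replacing the ordinary increment $y-t$ by the staircase increment $S_F^\alpha(y)-S_F^\alpha(t)$, and replacing ordinary limits by the fractal limit $F_{-}\lim$. A first reduction handles the easy case $t \notin F\cap[a,b]$: there every fractal derivative in the statement vanishes by definition, so each identity reduces to $0=0$. Hence I may restrict attention to $t \in F\cap[a,b]$, where genuine limit computations are required.

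Before tackling the three identities, I would isolate a small preliminary lemma: if $f$ is $F^\alpha$-derivable at $t \in F\cap[a,b]$, then $f$ is $F$-continuous at $t$. The proof rewrites
$$f(y)-f(t) \;=\; \frac{f(y)-f(t)}{S_F^\alpha(y)-S_F^\alpha(t)}\,\bigl(S_F^\alpha(y)-S_F^\alpha(t)\bigr),$$
observes that $S_F^\alpha$ is itself $F$-continuous at $t$ (immediate from its definition via the Hausdorff measure), and concludes that the right-hand side tends to $D_F^\alpha f(t)\cdot 0 = 0$ as $y\to t$ in the $F$-sense. This single lemma is the only structural ingredient beyond routine algebra, and it will be invoked in both the product and quotient rules.

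The sum rule then follows by additivity of the fractal limit applied to
$$\frac{(f+g)(y)-(f+g)(t)}{S_F^\alpha(y)-S_F^\alpha(t)} \;=\; \frac{f(y)-f(t)}{S_F^\alpha(y)-S_F^\alpha(t)} + \frac{g(y)-g(t)}{S_F^\alpha(y)-S_F^\alpha(t)}.$$
For the product rule I would use the classical add-subtract decomposition
$$\frac{f(y)g(y)-f(t)g(t)}{S_F^\alpha(y)-S_F^\alpha(t)} \;=\; f(y)\,\frac{g(y)-g(t)}{S_F^\alpha(y)-S_F^\alpha(t)} \;+\; g(t)\,\frac{f(y)-f(t)}{S_F^\alpha(y)-S_F^\alpha(t)};$$
by the preliminary lemma $f(y)\to f(t)$ in the $F$-sense, so passing to the $F$-limit yields $f(t)D_F^\alpha g(t) + g(t)D_F^\alpha f(t)$.

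The quotient rule is most cleanly handled by first deriving the reciprocal identity
$$D_F^\alpha\!\left(\frac{1}{g}\right)\!(t) \;=\; -\frac{D_F^\alpha g(t)}{g^2(t)}$$
from
$$\frac{1/g(y)-1/g(t)}{S_F^\alpha(y)-S_F^\alpha(t)} \;=\; -\frac{1}{g(y)g(t)}\cdot\frac{g(y)-g(t)}{S_F^\alpha(y)-S_F^\alpha(t)},$$
using the $F$-continuity of $g$ together with the implicit hypothesis $g(t)\neq 0$, and then combining this with the product rule applied to $f\cdot(1/g)$. The main conceptual obstacle is the $F$-continuity lemma, which legitimises passing $f(y)$ through the $F$-limit in the product decomposition; once this is in hand, the argument is a transparent transcription of the standard Euclidean proofs, with $S_F^\alpha$ playing the role of the identity function.
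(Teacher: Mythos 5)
Your argument is correct, and it is the standard one; note that the paper itself states Proposition \ref{pp1} without proof (it is imported from the foundational $F^{\alpha}$-calculus references), so there is no in-paper argument to diverge from. The one ingredient worth making explicit, which you correctly identify, is the lemma that $F^{\alpha}$-derivability at $t\in F\cap[a,b]$ implies $F$-continuity at $t$; together with the standing hypothesis that $F$ is $\alpha$-perfect (which guarantees $S_F^{\alpha}(y)\neq S_F^{\alpha}(t)$ for suitable $y\in F$ near $t$, so the difference quotients are well defined) and the implicit assumption $g(t)\neq 0$ in the quotient rule, your outline fills in everything the statement requires.
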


\begin{Definition}\label{999}

The characteristic function  $\chi_{F}:[a,b]\to\R$ of the fractal set $F$ is defined by
\begin{equation}\label{w}
  \chi_{F}(t)=\left\{
                \begin{array}{ll}
                  1, & t\in F\cap [a,b]; \\
                  0, & otherwise.
                \end{array}
              \right.
\end{equation}
\end{Definition}

\begin{remark}\label{*} 
Let us observe that: 

\begin{itemize}
    \item [$\star$] $\chi_{F}\in \mathcal{C}(F\cap [a,b]),$
    \item[$\star$] $D^{\alpha}_FS^{\alpha}_F(t)= \chi_{F}(t), \quad \forall t \in [a,b]$
\end{itemize}

\end{remark}

\begin{Definition}

Let $f:[a,b]\to\R$. If $f$ is $F^{\alpha}$-derivable at every point $t\in [a,b]$, then the $F^{\alpha}$-derivative function $D_F^\alpha f:[a,b]\to\R$ is well defined.
\begin{enumerate}

    \item If  $D_F^\alpha f(t)$ is $F$-continuous therefore we say that $f$ belongs to the space $C^1(F\cap [a,b])$. 

    \item If $D_F^\alpha f(t)$ is $F^{\alpha}$-derivable at $t\in F,$ we say that $f$ has a $2\alpha$-fractal derivative at $t,$ denoted by $D_F^{2\alpha} f(t):=D_F^\alpha (D_F^\alpha f (t))$.
\end{enumerate}
    
\end{Definition}

\begin{remark}\label{**} 
It is trivial to observe that:
$$D^{2\alpha}_FS^{\alpha}_F(t)= D_F^\alpha (D_F^\alpha S^{\alpha}_F(t))=D^{\alpha}_F \chi_{F}(t)=0, \quad \forall t\in [a,b]$$
\end{remark}

\begin{Definition}
Let $f:[a,b]\to\R$ and let $\Psi:[a,b]\to\R$ be two real functions. We say that $\Psi$ is a fractal-primitive of $f,$ if 
 $\Psi$ is $F^{\alpha}$-derivable on $[a,b]$  and we have that 
\begin{equation}\label{w}
  D^\alpha_F(\Psi(t))=\left\{
                \begin{array}{ll}
                  f(t), & t\in F\cap [a,b]; \\
                  0, & otherwise.
                \end{array}
              \right.
\end{equation}
 
 \end{Definition}

\begin{example}
    By Remark \ref{*} it follows that the Staircase function associated with the fractal set $F$ of order $\alpha$ is a fractal-primitive of the characteristic function  $\chi_{F}$.
\end{example}

 \begin{proposition}
 Let $f:[a,b]\to\R$.
If $f(t)$ admits a fractal-primitive $\Psi(t)$ on every point $t\in F\cap [a,b]$, then for every constant $S^{\alpha}_F (c)\in \R$, the function $\Phi(t)=\Psi(t)+ S^{\alpha}_F (c)$ is a fractal-primitive of $f(t)$. 
\end{proposition}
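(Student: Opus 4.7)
The plan is to show directly that $\Phi$ and $\Psi$ have identical fractal derivatives, exploiting the fact that $S^\alpha_F(c)$ is a fixed real number (the staircase evaluated at a single point $c$), hence independent of the running variable $t$. Adding such a constant to $\Psi(t)$ cancels out when forming a difference $\Phi(y)-\Phi(t)$, which is the only place the function values enter the definition of $D^\alpha_F$.

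Concretely, for $t \in F\cap [a,b]$ and $y \in F\cap [a,b]$ with $y\neq t$, I would write
\[
\frac{\Phi(y)-\Phi(t)}{S^\alpha_F(y)-S^\alpha_F(t)}
=\frac{\bigl(\Psi(y)+S^\alpha_F(c)\bigr)-\bigl(\Psi(t)+S^\alpha_F(c)\bigr)}{S^\alpha_F(y)-S^\alpha_F(t)}
=\frac{\Psi(y)-\Psi(t)}{S^\alpha_F(y)-S^\alpha_F(t)}.
\]
Passing to the $F$-limit as $y\to t$, the right-hand side converges to $D^\alpha_F\Psi(t)=f(t)$ by the hypothesis that $\Psi$ is a fractal-primitive of $f$ on $F\cap[a,b]$. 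Hence the left-hand side also has an $F$-limit, and $D^\alpha_F\Phi(t)=f(t)$ for every $t\in F\cap[a,b]$. For $t\notin F\cap[a,b]$, the definition of the fractal derivative assigns $D^\alpha_F\Phi(t)=0$ automatically, matching the defining requirement for a fractal-primitive of $f$.

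An equivalent route is to invoke the linearity of $D^\alpha_F$ from Proposition \ref{pp1}, applied to $\Phi=\Psi+S^\alpha_F(c)$, after observing that the constant function $t\mapsto S^\alpha_F(c)$ has vanishing fractal derivative. I prefer the direct cancellation above because it bypasses any need to discuss the $0/0$ indeterminacy in the difference quotient of a constant, relying instead on the already-established $F^\alpha$-derivability of $\Psi$.

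I do not anticipate a real obstacle: the only point that deserves care is to check that both defining cases of a fractal-primitive (on and off $F\cap[a,b]$) are satisfied, and both follow immediately from the cancellation identity and the convention for $t\notin F\cap[a,b]$.
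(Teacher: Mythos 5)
Your argument is correct: the constant $S^\alpha_F(c)$ cancels in the difference quotient, so $D^\alpha_F\Phi(t)=D^\alpha_F\Psi(t)=f(t)$ on $F\cap[a,b]$, and the off-$F$ case is handled by the convention in the definition of the fractal derivative. The paper omits the proof entirely (``The proof is straightforward''), and your direct cancellation is exactly the straightforward argument intended; your remark that this route avoids discussing the difference quotient of the constant function on its own is a reasonable refinement of the alternative linearity-based argument via Proposition \ref{pp1}.
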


The proof is straightforward.

\begin{Definition}
Let $f:[a,b]\to \R$ and let $t\in [a,b].$ The set of change of $f,$ symbolized as $Sch (f)$, is the collection of all such points $t$ where the behavior of the function is locally non-constant. Formally: 
$$Sch(f)=\{t\in [a,b]:\forall\delta>0, \, \exists y_1,y_2\in (t-\delta, t+\delta)\cap [a,b], \, \textit{such that}\,\, f(y_1)\neq f(y_2)\}$$

\end{Definition}

\begin{example}
Let $S^{\alpha}_F (c)\in \R$ and let $f_1(t)= S^{\alpha}_F (c),$ therefore    $\text{Sch}(f_1) = \emptyset$.

Let  $f_2(t) = t,$ for every $t\in [a,b],$ therefore $\text{Sch}(f_2) = [a,b]$.
\end{example}

\begin{proposition}
Let $\Psi:[a,b]\to\R$  and $\Phi:[a,b]\to\R$ be two fractal-primitives of $f:[a,b]\to \R$. If $Sch(\Psi -\Phi)\subset F\cap [a,b],$ therefore there exists a constant $S^{\alpha}_F (c)\in \R$ such that $\Psi(t)=\Phi(t)+S^\alpha_F(c)$ for every $t\in F\cap [a,b].$
\end{proposition}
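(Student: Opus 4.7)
The plan is to reduce the claim to showing that a function whose fractal derivative vanishes identically, and which is locally constant off $F$, must be constant on $F\cap[a,b]$. Set $h(t):=\Psi(t)-\Phi(t)$. Since both $\Psi$ and $\Phi$ are fractal-primitives of $f$ (and hence $F^{\alpha}$-derivable on the whole of $[a,b]$), the linearity stated in Proposition \ref{pp1} together with the defining property of a fractal-primitive yields $D^{\alpha}_F h(t)=0$ for every $t\in[a,b]$: on $F\cap[a,b]$ the two common values $f(t)$ cancel, and off $F$ both derivatives are zero by convention.

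Next I would exploit the hypothesis $Sch(h)\subset F\cap[a,b]$ to control $h$ on the complement of $F$. By the very definition of the set of change, every $y\in[a,b]\setminus F$ admits a $\delta>0$ such that $h$ is constant on $(y-\delta,y+\delta)\cap[a,b]$; since $[a,b]\setminus(F\cap[a,b])$ is open, this local constancy propagates to give that $h$ is constant on each of its connected components.

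The core step is to show that $h$ assumes one and the same value at every point of $F\cap[a,b]$. For arbitrary $s<t$ in $F\cap[a,b]$ I would invoke the fundamental theorem of $F^{\alpha}$-calculus from the Parvate-Gangal framework, which gives
\begin{equation*}
h(t)-h(s)=\int_{s}^{t} D^{\alpha}_F h(\tau)\,d^{\alpha}_F\tau=0,
\end{equation*}
since the integrand is identically zero. Alternatively, a fractal mean value theorem of the form $h(t)-h(s)=D^{\alpha}_F h(\xi)\bigl(S^{\alpha}_F(t)-S^{\alpha}_F(s)\bigr)$ for some $\xi\in F\cap[s,t]$ would yield the same conclusion. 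Either route shows that $h\equiv K$ on $F\cap[a,b]$ for some constant $K$; by the $F$-continuity and monotonicity of the staircase $S^{\alpha}_F$ on $[a,b]$, its range is a closed interval, so one may choose $c\in[a,b]$ with $S^{\alpha}_F(c)=K$, yielding the identity claimed in the statement.

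I expect the main obstacle to be this third step: passing from pointwise vanishing of $D^{\alpha}_F h$ to global constancy of $h$ along $F$. The fractal limit is computed through points of $F$ only, so constancy on $F$ cannot be deduced from the derivative condition in a purely local way; it requires either setting up the fractal Riemann integral and invoking a Newton-Leibniz type identity, or establishing a fractal mean value theorem, both of which ultimately rely on the regularity of the staircase inherited from the compact $\alpha$-perfect hypothesis on $F$. The $Sch$-hypothesis is exactly what is needed to rule out pathological jumps of $h$ at endpoints of complementary components, ensuring that the integral or MVT argument is not short-circuited by behavior invisible to the fractal derivative.
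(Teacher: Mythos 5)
Your proof takes essentially the same route as the paper: the paper likewise sets $H=\Psi-\Phi$, observes $D^\alpha_F H=0$ on $F\cap[a,b]$, and then concludes by citing Corollary 52 of Parvate--Gangal, which is exactly the constancy lemma (vanishing fractal derivative plus $Sch(H)\subset F\cap[a,b]$ implies $H$ is constant) that your third step sketches a proof of via the fractal fundamental theorem or mean value theorem. The argument is correct; the only cosmetic difference is that the paper uses $S^\alpha_F(c)$ merely as a symbol for an arbitrary constant, so your extra step realizing the constant $K$ inside the range of the staircase function is unnecessary (and would in fact fail for $K$ outside $[S^\alpha_F(a),S^\alpha_F(b)]$).
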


\begin{proof}
    Define $H(t)=\Psi (t)-\Phi (t),$ for all $t\in F\cap [a,b].$ By  hypothesis we have 
    $D^\alpha_F(H(t))=D^\alpha_F \Psi (t)-D^\alpha_F \Phi(t)=f(t)-f(t)=0,$ for all $t\in F\cap [a,b].$ The conclusion then follows by applying the Corollary 52 in \cite{parvate2009calculus} to the function $H$.
\end{proof}

\begin{Definition}
Let $f:[a,b]\to\R$.
The set of all fractal primitives of $f$ on $F\cap [a,b]$ is called the indefinite $F^{\alpha}-$integral and is denoted by the symbol 
$$ \int f(t) d_F^\alpha t  $$
\end{Definition}

\begin{example}
By  Remark \ref{*} it follows
 $$ \int \chi_F(t) d_F^\alpha t = S^{\alpha}_F(t) + S^{\alpha}_F(c)$$
\end{example}

\section{Solving  equations  of the form: $D^{\alpha}_F\ y(t)=g(\phi(y,t))$ \label{S-3}}

In this section, we address the class of homogeneous nonlinear fractal differential equations defined by:
\[
D^{\alpha}_F y(t) = g(\phi(y, t)), \quad \text{with } t \in F\cap [a, b]
\]
where \( \phi(y, t) \) is an assigned function. We establish that a carefully selected change of variables effectively reduces this complex nonlinear problem into a more tractable linear fractal differential equation with separable variables, thereby facilitating its study and analytical solution.

\subsection{First case: $\phi(y,t)=\dfrac{y(t)}{S^{\alpha}_F(t)}$}

The fractal differential equation we aim to solve has the form:
\begin{equation}
D^{\alpha}_F y(t) = g\left(\frac{y(t)}{S^{\alpha}_F(t)}\right), 
\end{equation}
To solve this equation, we make the substitution
\begin{equation}\label{e433}
y(t) = S^{\alpha}_F(t) z(t).
\end{equation}
Applying Proposition \ref{pp1} to both sides of Eq.~\eqref{e433}, we have:
\[
D^{\alpha}_F y(t) = z(t)\chi_F (t) + S^{\alpha}_F(t) D^{\alpha}_F z(t).
\]
Substituting this into the original equation, we obtain the following fractal differential equation:
\[
S^{\alpha}_F(t)\, D^{\alpha}_F z(t) + \chi_{F}(t) z(t) = g(z(t)),
\]
which can be solved by the method of separation of variables described in \cite{golmankhaneh2023solving}.

\begin{example}
Consider the fractal differential equation on the ternary Cantor set \( C \subset [0,1]\) given by:
\begin{equation}\label{iu657}
D^{\alpha}_C y(t) = 1 + \frac{y(t)}{S^{\alpha}_C(t)}.
\end{equation}
It is straightforward to observe that
\[
g\left(\frac{y(t)}{S^{\alpha}_C(t)}\right) \equiv 1+ \frac{y(t)}{S^{\alpha}_C(t)}.
\]
By setting \( z(t) = \dfrac{y(t)}{S^{\alpha}_C(t)} \) and applying Proposition \ref{pp1} to $y(t)=z(t)S^{\alpha}_C(t)$, we get:
\begin{equation}
D^{\alpha}_C y(t) = z(t)D^{\alpha}_C S^{\alpha}_C(t)+ S^{\alpha}_C(t) D^{\alpha}_C z(t),
\end{equation}
and thus, by Remark \ref{*}, we have:
\begin{equation}
z(t)\chi_C(t) + S^{\alpha}_C(t) D^{\alpha}_C z(t) = \chi_{C}(t) + z(t).
\end{equation}
Therefore, by Def.\ref{999} and by subtracting \( z(t) \) from both sides, we obtain:
\begin{equation}\label{333}
S^{\alpha}_C(t) D^{\alpha}_C z(t) = \chi_{C}(t).
\end{equation}
By the method of separation of variables, Eq.~\eqref{333} becomes:
\[
D^{\alpha}_C z(t) = \frac{1}{S^{\alpha}_C(t)}  \chi_C(t).
\]
Fractal integrating both sides yields:
\begin{equation}
z(t)  = \int \frac{\chi_C(t)}{S^{\alpha}_C(t)}\, d^{\alpha}_C t = \ln |S^{\alpha}_C(t)| + S_C^{\alpha}(c),
\end{equation}
where \( S_C^{\alpha}(c) \) is an arbitrary constant. Finally, since \( y(t) = S^{\alpha}_C(t) z(t) \), we obtain the exact solution:
\begin{equation}\label{23222}
y(t) = S^{\alpha}_C(t) \ln \left( S_{C}^{\alpha}(c') S^{\alpha}_C(t) \right),
\end{equation}
where \( S_{C}^{\alpha}(c') = \pm \exp(S_{C}^{\alpha}(c)) \). Note that in Figure~\ref{fig:SF_logSF}, we depict the graphical representation of the solution to Eq.~\eqref{iu657}.
\begin{figure}[H]
    \centering
    \includegraphics[width=0.8\textwidth]{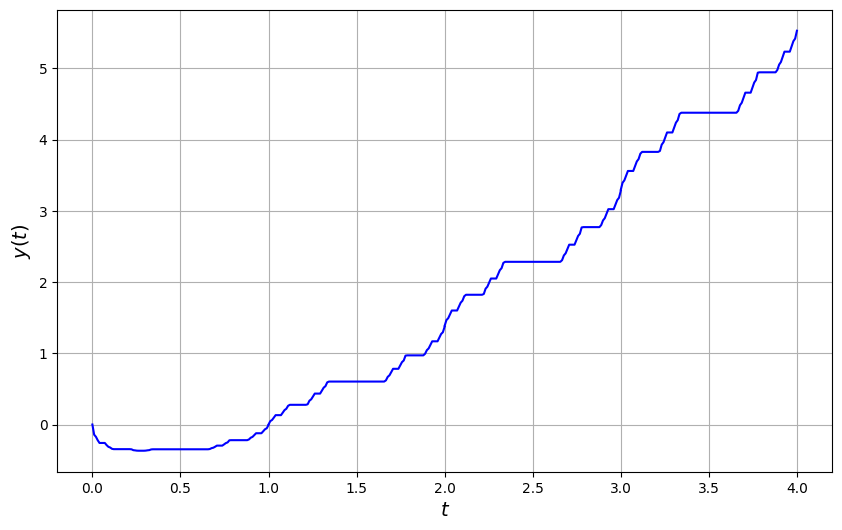}
    \caption{Plot of Eq.~\eqref{23222} over the ternary Cantor set with fractal dimension \( \alpha = \frac{\log 2}{\log 3} \). }
    \label{fig:SF_logSF}
\end{figure}
\end{example}

\subsection{Second case: $\phi(y,t)=a S^{\alpha}_F(t)+by(t)$ with $a$ and $b$ two non-zero real numbers.}

Let us consider a fractal differential equation of the type
\[
D^{\alpha}_F y(t) = g(a S^{\alpha}_F(t) + b y(t)), 
\]
where \( g \) is an \( F \)-continuous function  and \( a \), \( b \) are two non-zero real constants. This equation can be reduced to a fractal differential equation with separable variables by the following substitution:
\[
z = a S^{\alpha}_F(t) + b y(t).
\]

In fact, since \( D^{\alpha}_F z(t) = a \chi_F(t) + b D^{\alpha}_F y(t) \), we obtain the equivalent fractal differential equation in the unknown variable \( z \):
\[
D^{\alpha}_F z(t) = a \chi_F(t) + b g(z(t)),
\]
which can be solved easily using the separation of variables method (see \cite{golmankhaneh2023solving}).

\begin{example}
Let us consider the fractal differential equation on a fractal set \( F \) given by:
\begin{equation}\label{9oo}
D^{\alpha}_F y(t) = \left(S^{\alpha}_F(t) + y(t)\right)^2.
\end{equation}
It is easy to observe that here the function \( g \) is defined by:
\[
g(t) = \left(S^{\alpha}_F(t) + y(t)\right)^2.
\]
Now, by setting \( z(t) = S^{\alpha}_F(t) + y(t) \), and applying the \( F^{\alpha} \)-derivative to both sides, we obtain:
\[
D^{\alpha}_F z(t) = \chi_F(t) + D^{\alpha}_F y(t), 
\]
Therefore,
\[
D^{\alpha}_F z(t) = \chi_F(t) + \left(S^{\alpha}_F(t) + y(t)\right)^2 = \chi_F(t) + z^2(t).
\]
That is,
\begin{equation}\label{87by}
D^{\alpha}_F z(t) = \chi_F(t) + z^2(t).
\end{equation}

So, solving it by the separation of variables, we obtain:
\[
\arctan z (t) = \int \frac{1}{\chi_F(t) + z^2} \, d^{\alpha}_F z = \int \chi_F(t)d^{\alpha}_F t = S^{\alpha}_F(t) + S^{\alpha}_F(c),
\]
where \( S^{\alpha}_F(c) \) is an arbitrary constant. Thus, solving for \( y (t)\), we obtain the exact solution:
\begin{equation}\label{33322}
y(t) = \tan\left(S^{\alpha}_F(t) + S^{\alpha}_F(c)\right) - S^{\alpha}_F(t).
\end{equation}

Note that in Figure~\ref{fig:tan_minus_SF}, we depict the graphical representation of the solution of Eq.~\eqref{9oo}.
\begin{figure}[H]
    \centering
    \includegraphics[width=0.8\textwidth]{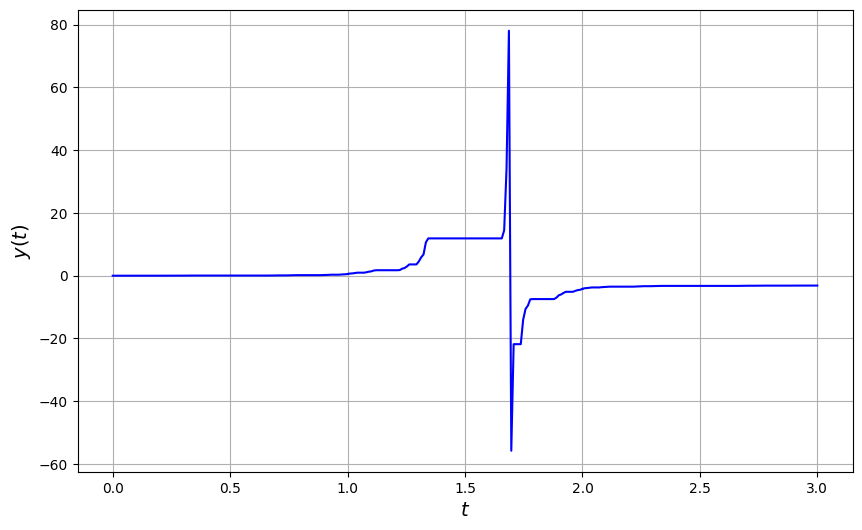}
    \caption{Plot of Eq.~\eqref{33322} over the ternary Cantor set. The solution visualizes the interplay between the fractal staircase function \( S^{\alpha}_F(t) \) and the tangent function.}
    \label{fig:tan_minus_SF}
\end{figure}
\end{example}

\section{Solving Riccati-Type Fractal Differential Equations \label{S-4}}

In this section, we examine, through the proof of several propositions, methods for solving the following class of nonlinear fractal differential equations:

\begin{equation}\label{R333}
D^{\alpha}_F y (t) = a(t)y(t) + b(t)y^2(t) + c(t), \quad \forall t \in F\cap [a,b],
\end{equation}

where  \( a(t) \), \( b(t) \), and \( c(t) \) are \( F \)-continuous functions.
The Eq ~\eqref{R333} is called a Riccati-type fractal differential equation (RFDE) due to its similarity to the classical Riccati equation.

The general solution of RFDE is complex.
Several specific solution techniques are described below.

First of all let us observe that if \( c(t) \equiv 0 \) for all \( t \in F \cap [a,b]\), then Eq.~\eqref{R333} reduces to the fractal Bernoulli differential equation
\[
D^{\alpha}_F y (t) = a(t)y(t) + b(t)y^2(t),
\]
which has already been studied in \cite{golmankhaneh2023solving}. Furthermore, if \( b(t) \equiv 0 \) for all \( t \in F\cap [a,b] \), then Eq.~\eqref{R333} becomes the linear fractal differential equation
\[
D^{\alpha}_F y (t) = a(t)y(t) + c(t), \quad t \in F\cap [a,b],
\]
which has also been investigated in \cite{golmankhaneh2023solving}.

For this reason, from now on we will consider Eq.~\eqref{R333} with both coefficients \( c(t)\) and \(b(t)\) different from zero.

Let us start by describing some solution techniques of RFDE that are based on the knowledge of a particular solution.

\begin{proposition}\label{p1}
Each solution of equation the RFDE (Eq. ~\eqref{R333}) has  the following form
\begin{equation}
y(t) =\,u(t) \,+\,v(t),
\end{equation}
where \( u(t) \) is a particular solution of Eq.~\eqref{R333} while \( v(t) \) is an exact solution of the following Bernoulli-type fractal differential equation:
\begin{equation}\label{B12}
D^{\alpha}_F v(t) = \left(a(t) + 2b(t)u(t)\right)v(t) + b(t)v^2(t).
\end{equation}

\end{proposition}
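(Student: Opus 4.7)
The plan is to mimic the classical substitution technique for the ordinary Riccati equation, leveraging the linearity of the $F^\alpha$-derivative established in Proposition \ref{pp1}. Given any solution $y(t)$ of Eq.~\eqref{R333} and a fixed particular solution $u(t)$, I would simply define $v(t) = y(t) - u(t)$ and show by direct computation that $v$ satisfies the Bernoulli-type equation \eqref{B12}. This immediately yields the decomposition $y(t) = u(t) + v(t)$ claimed in the statement.

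The first step is to record the defining identity for the particular solution, namely
\[
D^{\alpha}_F u(t) = a(t)u(t) + b(t)u^2(t) + c(t), \quad t \in F\cap[a,b].
\]
Next, writing $y(t) = u(t) + v(t)$ and applying the additivity part of Proposition \ref{pp1}, I obtain $D^{\alpha}_F y(t) = D^{\alpha}_F u(t) + D^{\alpha}_F v(t)$. Substituting into Eq.~\eqref{R333} and expanding $(u(t)+v(t))^2 = u^2(t) + 2u(t)v(t) + v^2(t)$ gives
\[
D^{\alpha}_F u(t) + D^{\alpha}_F v(t) = a(t)u(t) + a(t)v(t) + b(t)u^2(t) + 2b(t)u(t)v(t) + b(t)v^2(t) + c(t).
\]
Cancelling the terms corresponding to the identity satisfied by $u(t)$ leaves exactly
\[
D^{\alpha}_F v(t) = \bigl(a(t) + 2b(t)u(t)\bigr)v(t) + b(t)v^2(t),
\]
which is Eq.~\eqref{B12}.

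There is no serious obstacle here: the proof reduces to the linearity of $D^{\alpha}_F$ and a single algebraic expansion. The only point worth guarding against is to make sure that every manipulation happens pointwise on $F \cap [a,b]$, where the RFDE is posed, so that no values outside the fractal support are implicitly invoked. Conversely, one should note that once $v(t)$ is known as any solution of \eqref{B12}, reversing the computation shows that $u(t)+v(t)$ does solve \eqref{R333}, so the decomposition is both necessary and sufficient, confirming that \emph{every} solution of the RFDE is captured by the stated form.
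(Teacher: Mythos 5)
Your computation is correct and rests on exactly the same algebra as the paper's proof: linearity of $D^{\alpha}_F$ from Proposition \ref{pp1}, the expansion $(u+v)^2 = u^2 + 2uv + v^2$, and cancellation of the identity satisfied by the particular solution $u$. The difference is the logical direction. The paper starts from a $v$ that is assumed to solve the Bernoulli-type equation \eqref{B12} and verifies that $y = u + v$ then satisfies the RFDE \eqref{R333}; that is the \emph{sufficiency} of the decomposition. You instead take an arbitrary solution $y$ of \eqref{R333}, set $v = y - u$, and deduce that $v$ must satisfy \eqref{B12}; that is the \emph{necessity}, which is what the proposition as worded (``each solution \ldots has the following form'') actually asserts. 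In that sense your argument matches the statement more directly than the paper's own proof does, and your closing remark that the computation reverses to give sufficiency as well makes the equivalence explicit. Your caution about working pointwise on $F \cap [a,b]$ is also apt, since the fractal derivative is defined to vanish off the support. No gap.
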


\begin{proof}
Let \( u(t) \) be a particular solution of Eq.~\eqref{R333}, and let \( v(t) \) be an exact solution of Eq.~\eqref{B12}. We aim to show that
\begin{equation}\label{uv}
y(t) = u(t) + v(t)
\end{equation}
is a general solution of Eq.~\eqref{R333}. Therefore, applying Proposition \ref{pp1} to both members of Eq.~\eqref{uv}  and requiring that  the function $u(t)+v(t)$  satisfies  Eq.~\eqref{R333}, we get:
\begin{align}\label{222}
D^{\alpha}_F y(t) &= D^{\alpha}_F u(t) + D^{\alpha}_F v(t) \\
&= a(t)(u(t) + v(t)) + b(t)(u(t) + v(t))^2 + c(t) \nonumber \\
&= \left[a(t)u(t) + b(t)u^2(t) + c(t)\right] + a(t)v(t) + 2b(t)u(t)v(t) + b(t)v^2(t). \nonumber
\end{align}
Since \( u(t) \) satisfies Eq.~\eqref{R333} and \( v(t) \) satisfies Eq.~\eqref{B12}, the equation holds true. Hence, \( y(t) = u(t) + v(t) \) is indeed an exact solution of Eq.~\eqref{R333}.
\end{proof}

Unfortunately, there is no general algorithm for finding the particular solution \( u(t) \), as it depends on the specific forms of the functions \( a(t) \), \( b(t) \), and \( c(t) \).

In the following, we present an example to illustrate the method.

\begin{example} \label{999moz}
Consider the following RFDE on a fractal set \( F \subset [0,1] \):
\begin{equation}\label{r1}
D^{\alpha}_F y(t) + 2S^{\alpha}_F(t)y(t) = \chi_{F}(t) + \left(S^{\alpha}_F(t)\right)^2 + y^2(t).
\end{equation}
Here the functions \( a(t) \), \( b(t) \), and \( c(t) \) are respectively: \( a(t) \,=\,2S^{\alpha}_F(t) \), \( b(t)=1 \) and \( c(t)= \chi_{F}(t) +\left(S^{\alpha}_F(t)\right)^2\). Moreover,  \( u(t) = S^{\alpha}_F(t) \) is a particular solution of Eq.~\eqref{r1}. Indeed, since \( D^{\alpha}_F S^{\alpha}_F(t) = \chi_{F}(t)\); see Remark \ref{*} and \cite{parvate2009calculus, Alireza-book}, it is trivial to verifies that
\begin{equation}
D^{\alpha}_F u(t) \,=\,- 2S^{\alpha}_F(t)u(t) + u^2(t)\,+\,\chi_{F}(t) + \left(S^{\alpha}_F(t)\right)^2.
\end{equation}
To obtain the general solution of Eq.~\eqref{r1}, we apply the proposition \ref{p1}, which requires solving the following  Bernoulli-type differential equation:
\begin{equation}\label{b1}
D^{\alpha}_F v(t) = v^2(t).
\end{equation}
Finally, following methods in \cite{golmankhaneh2023solving}, we get:
\[
- \frac{1}{v(t)} = S^{\alpha}_F(t) + S^{\alpha}_F(c),
\]
where $S^{\alpha}_F(c)$ is a constant.
Hence:
\[
v(t) = -\frac{1}{S^{\alpha}_F(t) + S^{\alpha}_F(c)}.
\]
Therefore, the general solution of Eq.~\eqref{r1} is given by:
\begin{equation}\label{77}
y(t) = S^{\alpha}_F(t) - \frac{1}{S^{\alpha}_F(t) + S^{\alpha}_F(c)}.
\end{equation}

\begin{figure}[H]
    \centering
    \includegraphics[width=0.85\textwidth]{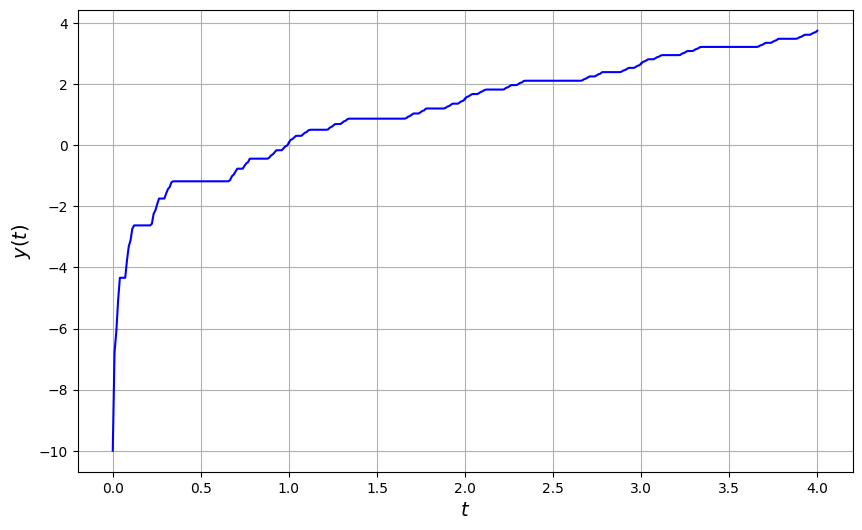}
    \caption{Plot of Eq.~\eqref{77} for a shifted sequence of time series, where \( S^{\alpha}_F(t) \) is the integral staircase function corresponding to the middle-third Cantor set with fractal dimension \( \alpha = \frac{\log 2}{\log 3} \).}
    \label{fig:y_vs_SF_alpha}
\end{figure}
\end{example}

\begin{proposition}\label{p2}
Each solution of equation the RFDE (Eq. ~\eqref{R333}) has  the following form
\begin{equation}
y(t) =\,u(t) \,+\,\frac{1}{v(t)}, \quad \forall t \in F,
\end{equation}
where \( u(t) \) is a particular solution of Eq.~\eqref{R333} while \( v(t) \) is an exact solution of the following linear fractal differential equation:
\begin{equation}\label{B14}
D^{\alpha}_F v(t) = -\left(a(t) + 2b(t)u(t)\right)v(t) -b(t).
\end{equation}

\end{proposition}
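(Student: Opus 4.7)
The strategy is to mirror the proof of Proposition \ref{p1}: substitute the ansatz $y(t) = u(t) + 1/v(t)$ directly into the RFDE (Eq.~\eqref{R333}) and use the linearity and quotient rule from Proposition \ref{pp1} to show that the resulting identity is equivalent to Eq.~\eqref{B14}. Since the calculation is reversible under the natural nondegeneracy assumption $v(t)\neq 0$ on $F\cap[a,b]$, this yields the claim.

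First I would compute $D^{\alpha}_F y(t)$. By linearity, $D^{\alpha}_F y(t) = D^{\alpha}_F u(t) + D^{\alpha}_F (1/v(t))$, and the quotient rule from Proposition \ref{pp1}, applied with numerator $1$ (whose fractal derivative is zero), gives $D^{\alpha}_F (1/v(t)) = -D^{\alpha}_F v(t)/v^2(t)$. Second, I would expand the right-hand side of Eq.~\eqref{R333} at $y = u + 1/v$: the linear term becomes $a(t)u(t) + a(t)/v(t)$, the quadratic term becomes $b(t)u^2(t) + 2b(t)u(t)/v(t) + b(t)/v^2(t)$, and $c(t)$ is unchanged.

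Third, I would isolate the block $a(t)u(t) + b(t)u^2(t) + c(t)$ on the right; by hypothesis $u$ is a particular solution of Eq.~\eqref{R333}, so this block equals $D^{\alpha}_F u(t)$ and cancels with the corresponding term on the left. What remains is the identity
\[
-\frac{D^{\alpha}_F v(t)}{v^2(t)} = \frac{a(t)}{v(t)} + \frac{2b(t)u(t)}{v(t)} + \frac{b(t)}{v^2(t)},
\]
and multiplying through by $-v^2(t)$ reproduces Eq.~\eqref{B14} exactly. Reading the chain of equalities in reverse shows that, conversely, every $v$ solving Eq.~\eqref{B14} yields $y = u + 1/v$ solving Eq.~\eqref{R333}.

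The main obstacle is really a caveat rather than a genuine difficulty: one must ensure $v(t)\neq 0$ on $F\cap[a,b]$ so that $1/v$ is defined, the quotient rule from Proposition \ref{pp1} applies, and the multiplication by $-v^2(t)$ preserves equivalence. Apart from this standing assumption, the proof is a routine computation structurally identical to that of Proposition \ref{p1}, with the Bernoulli equation there replaced by the linear fractal equation \eqref{B14} here, reflecting the fact that the reciprocal substitution $y = u + 1/v$ linearizes the Bernoulli remainder.
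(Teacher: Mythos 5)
Your proposal is correct and is precisely the argument the paper intends: the authors leave the proof of Proposition \ref{p2} to the reader as being analogous to that of Proposition \ref{p1}, and your substitution of $y=u+1/v$, application of the quotient rule from Proposition \ref{pp1}, cancellation of the block $a(t)u(t)+b(t)u^2(t)+c(t)$ against $D^{\alpha}_F u(t)$, and multiplication by $-v^2(t)$ reproduce Eq.~\eqref{B14} exactly. Your caveat that $v(t)\neq 0$ is needed is a reasonable (and implicit in the paper) nondegeneracy assumption, so nothing further is required.
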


\bigskip

The proof of this proposition is left to the reader due to its similarity to the proof of Proposition \ref{p1}.
\begin{remark}\label{rr}
The method described in Proposition \ref{p2} could be more effective than the one in Proposition \ref{p1}, as the function $v(t)$ satisfies a linear fractal differential equation instead of a Bernoulli-type equation. We now demonstrate how Proposition \ref{p2} allows us to solve the RFDE proposed in Example \ref{999moz} more easily and quickly. Indeed, the linear differential equation associated with the RFDE  is: 
$$D^{\alpha}_F v(t)=-1.$$ Therefore the exact solution of Eq. ~\eqref{r1} is \begin{equation}
y(t) = S^{\alpha}_F(t) + \frac{1}{S^{\alpha}_F(c)-S^{\alpha}_F(t)}.
\end{equation}
where $S^{\alpha}_F(c)$ is a constant.

\end{remark}

\medskip
To better describe the method proposed by Proposition \ref{p2}, let us examine the following example:

\begin{example}

Let \begin{equation}\label{r10}
D^{\alpha}_F y(t) =\frac{1}{ S^{\alpha}_F(t)}y(t) + y^2(t) -4 (S^{\alpha}_F(t))^2
\end{equation}
 be a RFDE defined on a fractal subset of the real line $F\subset [0,1].$

 It is trivial to observe that $u(t)=2S^{\alpha}_F(t)$ is a particular solution of Eq.\eqref{r10}.

 Let us show that an exact solution of the proposed RFDE is of the form:
 \begin{equation}\label{1}
y(t) =\,u(t) \,+\,\frac{1}{v(t)}=2S^{\alpha}_F(t)\,+\,\frac{1}{v(t)}, \quad \forall t \in F,
\end{equation}

where the function \( v(t) \) is to be determinate.

Let us $F^{\alpha}$-derive both members of the Eq. \eqref{1}:
$$D^{\alpha}_F y(t)=2\, \chi_{F}(t) -\frac{D^{\alpha}_F v(t)}{v^2(t)}$$ and impose that Eq. \eqref{1} verifies  Eq. \eqref{r10}, so we obtain:

 \begin{equation}\label{111}
2\, \chi_{F}(t) -\frac{D^{\alpha}_F v(t)}{v^2(t)}=\frac{1}{ S^{\alpha}_F(t)}\left (2S^{\alpha}_F(t)\,+\,\frac{1}{v(t)}\right )+ \left (2S^{\alpha}_F(t)\,+\,\frac{1}{v(t)}\right )^2-4(S^{\alpha}_F(t))^2.
\end{equation}

Now, solving Eq.\eqref{111} with respect to $v(t)$, we get:

\begin{equation}\label{000}
D^{\alpha}_F v(t)=-\left(4S^{\alpha}_F(t)+\frac{1}{ S^{\alpha}_F(t)}\right )v(t)-1.
\end{equation}

Finally, following methods in \cite{golmankhaneh2023solving} we have that the exact solution of Eq. \eqref{000} is
\begin{equation}
    v(t)=\frac{e^{(-S^{\alpha}_F(t))^2}}{S^{\alpha}_F(t)}\left ( -\frac{e^{(S^{\alpha}_F(t))^2}}{S^{\alpha}_F(t)}+S^{\alpha}_F(c)\right ),
\end{equation}

where $S^{\alpha}_F(c)$ is a constant.

Therefore the exact solution of Eq. \eqref{r10} is:

\begin{equation}\label{11}
y(t) =\,2S^{\alpha}_F(t)\,+\,\frac{1}{\frac{e^{(-S^{\alpha}_F(t))^2}}{S^{\alpha}_F(t)}\left ( -\frac{e^{(S^{\alpha}_F(t))^2}}{S^{\alpha}_F(t)}+S^{\alpha}_F(c)\right )}, \quad \forall t \in F.
\end{equation}

 \begin{figure}
  \centering
  \includegraphics[width=\textwidth]{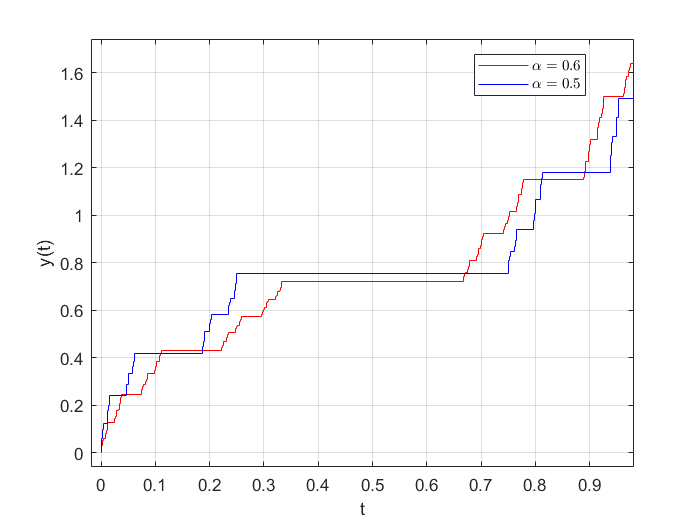}
  \caption{Plot of Eq.\eqref{11} for different fractal subsets of the real line.}\label{err}
\end{figure}
In Figure \ref{err}, we show the effect of the support of the function on the solution with dimensions  $\alpha=0.5$ and $\alpha=0.6$.
\end{example}

\medskip

\begin{proposition}\label{p3}
Let \( b : F\cap [a,b] \to \mathbb{R} \) be a positive function such that \( b \in C^1(F\cap [a,b]) \).
Let \( z(t) \) be a solution of the following second-order fractal differential equation:
\begin{equation}\label{eee2}
D^{2\alpha}_F z(t) = \left( \frac{D^{\alpha}_F b(t)}{b(t)} + a(t) \right) D^{\alpha}_F z(t) - b(t)c(t)z(t), \quad \forall t \in F\cap [a,b],
\end{equation}

Then the  RFDE (Eq. ~\eqref{R333}) has an exact solution of the form
\begin{equation}\label{xxx}
y(t) =\, -\,\frac{D^{\alpha}_F z(t)}{b(t)z(t)}, \qquad \forall t \in F\cap [a,b].
\end{equation}

\end{proposition}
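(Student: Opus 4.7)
The plan is to proceed by direct substitution and verification, in the spirit of the classical linearization of a Riccati equation through the change of variables $y=-z'/(bz)$. First, I would take the candidate solution $y(t)=-D^{\alpha}_F z(t)/(b(t)z(t))$ and compute $D^{\alpha}_F y(t)$ by applying the quotient rule from Proposition \ref{pp1}, together with the product rule applied to the denominator $b(t)z(t)$. The regularity hypothesis $b\in C^1(F\cap[a,b])$ is precisely what legitimizes these manipulations and ensures that $D^{\alpha}_F b$ is itself $F$-continuous. The resulting expression for $D^{\alpha}_F y(t)$ splits into three terms: one proportional to $D^{2\alpha}_F z$, one mixed term involving $D^{\alpha}_F z\cdot D^{\alpha}_F b$, and a purely quadratic contribution $(D^{\alpha}_F z)^2/(b z^2)$.

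Next, I would expand the right-hand side of the RFDE \eqref{R333}, namely $a(t)y(t)+b(t)y^2(t)+c(t)$. The crucial observation is that $b(t)y^2(t)$ reduces exactly to $(D^{\alpha}_F z)^2/(b z^2)$, matching the purely quadratic term already present in $D^{\alpha}_F y(t)$. This cancellation is the algebraic mechanism that causes the Riccati equation to collapse to a linear second-order equation in $z$, and it is what forces the specific prefactor $-1/(bz)$ in the substitution \eqref{xxx}.

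After that cancellation, the identity left to verify becomes
\[
-\frac{D^{2\alpha}_F z(t)}{b(t)z(t)}+\frac{D^{\alpha}_F z(t)\,D^{\alpha}_F b(t)}{b^2(t)\,z(t)}\;=\;-\frac{a(t)\,D^{\alpha}_F z(t)}{b(t)z(t)}+c(t).
\]
Multiplying through by $-b(t)z(t)$ and regrouping yields precisely the hypothesis \eqref{eee2}, so the candidate $y(t)$ indeed satisfies \eqref{R333}.

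The main obstacle I anticipate is not conceptual but purely notational: one must carefully chain the quotient rule applied to $(-D^{\alpha}_F z)/(bz)$ with the product rule applied inside the denominator $bz$, keeping track of signs and common factors before any simplification becomes visible. A secondary technical point worth flagging is that the substitution \eqref{xxx} only makes sense where $b(t)z(t)\neq 0$; since $b$ is positive by hypothesis, this amounts to requiring $z(t)\neq 0$ on $F\cap[a,b]$, a condition that should be stated explicitly when invoking the formula.
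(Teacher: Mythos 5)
Your proposal is correct and takes essentially the same route as the paper's own proof: direct substitution of $y=-D^{\alpha}_F z/(bz)$ into the RFDE via the quotient and product rules, cancellation of the quadratic terms, and regrouping to recover Eq.~\eqref{eee2}. Your closing remark that the substitution requires $z(t)\neq 0$ on $F\cap[a,b]$ is a valid point that the paper leaves implicit.
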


\begin{proof}
Let us suppose that \( z(t) \) is a solution of Eq ~\eqref{eee2} and let us show that
\begin{equation}
y(t) =\, -\,\frac{D^{\alpha}_F z(t)}{b(t)z(t)},
\end{equation}
is a solution of Eq. ~\eqref{R333}.

To do this, let us take the \( F^{\alpha} \)-derivative of both sides of the previous equation:
\begin{align} \label{xxxx}
D^{\alpha}_F y(t) &= D^{\alpha}_F \left( -\frac{D^{\alpha}_F z(t)}{b(t)z(t)} \right) \\
&= \frac{-b(t)z(t)D^{2\alpha}_F z(t) + z(t)D^{\alpha}_F z(t)  D^{\alpha}_F b(t) + b(t) (D^{\alpha}_F z(t))^2}{b^2(t)z^2(t)}\nonumber
\end{align}

and impose that Eq ~\eqref{xxxx} satisfies   Eq. ~\eqref{R333}.

Therefore, we have:
\begin{align}
&\frac{-b(t)z(t)D^{2\alpha}_F z(t) + z(t)D^{\alpha}_F z(t)  D^{\alpha}_F b(t)  + b(t) (D^{\alpha}_F z(t))^2}{b^2(t)z^2(t)} \\&= -\,\frac{a(t)D^{\alpha}_F z}{b(t)z(t)}+ b(t) \left( \frac{D^{\alpha}_F z(t)}{b(t)z(t)} \right)^2 + c(t).
\end{align}

Now, multiplying both sides by \( b(t)z(t) \) and simplifying the equation, we obtain:
\begin{equation}
- D^{2\alpha}_F z(t) + \frac{D^{\alpha}_F b(t)}{b(t)} D^{\alpha}_F z(t) = -\,a(t) D^{\alpha}_F z(t) + b(t)c(t)z(t).
\end{equation}

Finally, rearranging terms gives the required second-order fractal differential equation:
\begin{equation}
D^{2\alpha}_F z(t) = \left( \frac{D^{\alpha}_F b(t)}{b(t)} + a(t) \right) D^{\alpha}_F z(t) - b(t)c(t)z(t), \quad \forall t \in F\cap [a,b].
\end{equation}

\end{proof}

\begin{example}
Let $C$ be the classical Cantor set and let
\begin{equation}\label{e1}
D^{\alpha}_C y(t) = -\frac{3}{S^{\alpha}_C(t)} y(t) + (S^{\alpha}_C(t))^3 y^2 + \frac{1}{(S^{\alpha}_C(t))^3},
\end{equation}

be the  RFDE defined on each point of   \( C \). Here  $\alpha=\frac{\log 3 }{\log 2}$, however, to simplify the notation, we will continue to use $\alpha$ instead of its value. 

It is trivial to notice that \( b(t)= \left (S^{\alpha}_C(t)\right )^3 \in C^1(C) \) on each point of  \( C \),
 therefore, to solve Eq.~\eqref{e1}, we can apply Proposition \ref{p3}.

We know that a solution to Eq.~\eqref{e1} has the following form:
\begin{equation}\label{e2}
y(t) = -\frac{D^{\alpha}_C z(t)}{(S^{\alpha}_C(t))^3 z(t)},
\end{equation}
where $z(t)$ is a solution of the Eq ~\eqref{eee2}.

Now, by taking the \( C^{\alpha}\)-derivative of both sides of Eq.~\eqref{e2}, we obtain:
\begin{equation}\label{e3}
D^{\alpha}_C y(t) = \frac{- D^{2\alpha}_C z(t) \left( (S^{\alpha}_C(t))^3 z(t) \right) + D^{\alpha}_C z(t) \left( 3z(t)(S^{\alpha}_C(t))^2 + (S^{\alpha}_C(t)^3 D^{\alpha}_C z(t) \right)}{(S^{\alpha}_C(t))^6 z(t)^2}.
\end{equation}

Imposing that Eq.~\eqref{e2} satisfies Eq.~\eqref{e1}, we have:
\begin{equation}\label{e4}
D^{\alpha}_C y(t) = \frac{3 D^{\alpha}_C z(t)}{(S^{\alpha}_C(t))^4 z(t)} + (S^{\alpha}_C(t))^3 \frac{(D^{\alpha}_C z(t))^2}{(S^{\alpha}_C(t))^6 z(t)^2} + \frac{1}{(S^{\alpha}_C(t))^3}.
\end{equation}

Finally, equating Eq.~\eqref{e3} with Eq.~\eqref{e4} and simplifying appropriately, we obtain the following second-order homogeneous fractal differential equation with constant coefficients:
\begin{equation}\label{e5}
D^{2\alpha}_C z(t) + z(t) = 0.
\end{equation}

Following the methods examined in \cite{khalili2024fractal}, we find that an exact solution of Eq.~\eqref{e5} is:
\begin{equation}\label{e6}
z(t) = S_C^{\alpha}(c_1)\cos(S_{F}^{\alpha}(t)) + S_C^{\alpha}(c_2) \sin(S_{F}^{\alpha}(t)),
\end{equation}
where \( S_C^{\alpha}(c_1) \) and \( S_C^{\alpha}(c_2) \) are two arbitrary constants.

Thus, by Eq.~\eqref{e6}, to find the general solution to Eq.~\eqref{e1}, it is sufficient to take the \( C^{\alpha} \)-derivative of Eq.~\eqref{e6}:
\begin{equation}\label{e7}
D^{\alpha}_C z(t) = -S_C^{\alpha}(c_1) \sin(S_{F}^{\alpha}(t)) + S_C^{\alpha}(c_1)\cos(S_{F}^{\alpha}(t)).
\end{equation}

Therefore, indicating by \( S_C^{\alpha}(c) = S_C^{\alpha}(c_2) / S_C^{\alpha}(c_1) \)  a constant, an exact general solution of Eq.~\eqref{e1} is:
\begin{equation}\label{e8}
y(t) = \frac{\sin(S_{F}^{\alpha}(t)) -c \cos(S_{F}^{\alpha}(t))}{(S^{\alpha}_C(t))^3 (\cos(S_{F}^{\alpha}(t)) + S_C^{\alpha}(c) \sin(S_{F}^{\alpha}(t)))}.
\end{equation}

Note that Figure \ref{1fig:fractal_y_function} shows the graph of Eq.~\eqref{e8}  with the constant \( S_C^{\alpha}(c) = 1 \).
\begin{figure}[H]
    \centering
    \includegraphics[width=0.8\textwidth]{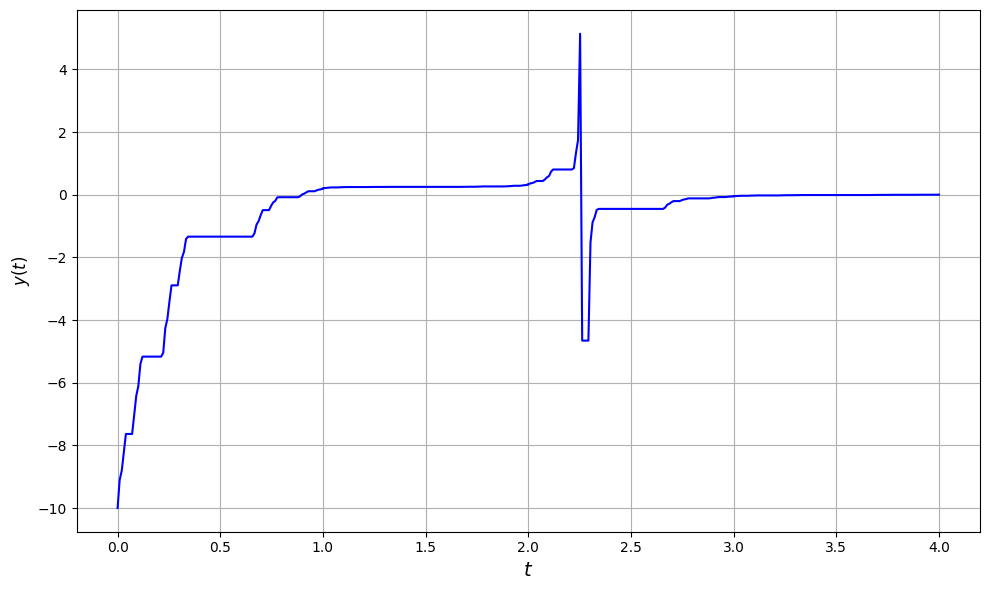}
    \caption{Plot of Eq.~\eqref{e8}, where \( S^{\alpha}_F(t) \) is the integral staircase function based on the middle-third Cantor set.}
    \label{1fig:fractal_y_function}
\end{figure}
\end{example}

\section{Fractal Riccati Formulation of the Schr\"{o}dinger Equation \label{S-5}}

In this section we show how the Riccati-type fractal diffeerntial equation (RDFE) intervenes in the formulation of the Schr\"{o}dinger equation on fractal domains.
The time-independent $\alpha$-dimensional Schr\"{o}dinger fractal equation \cite{golmankhaneh2024fractalEE}   on an $\alpha$ perfect fractal set $F$ is given by
\begin{equation}\label{SchrodingerFractal}
-\frac{\hbar^{2}}{2m}D_{F}^{2\alpha}\psi(t) + V(t)\psi(t) = E\psi(t), \quad \forall t \in F,
\end{equation}
where $V(t)$ is the potential, $\psi(t)$ the wavefunction, and $E$ the energy eigenvalue.

An efficient method for its study is the \emph{factorization approach} \cite{bagchi2000supersymmetry}, in which the Hamiltonian is written as a product of first $\alpha$-order operators:
\begin{equation}
A = D_{F}^{\alpha} + W(t), \qquad A^{\dagger} = -D_{F}^{\alpha} + W(t),
\end{equation}
where $D_{F}^{\alpha}$ is the $F^{\alpha}$-derivative operator and $W(t)$ is the \emph{superpotential} \cite{gangopadhyaya2017supersymmetric}. The Hamiltonian becomes
\begin{equation}
H = A^{\dagger}A = -D_{F}^{2\alpha} + W(t)^{2} - D_{F}^{\alpha}W(t).
\end{equation}

Thus the potential is related to $W(t)$ through the RFDE:
\begin{equation}\label{FractalRiccati}
V(t) = W(t)^{2} - D_{F}^{\alpha} W(t), \quad \forall t \in F.
\end{equation}

The superpotential itself is connected with the ground-state wavefunction $\psi_{0}(t)$ via
\begin{equation}
W(t) = -D_{F}^{\alpha}\ln \psi_{0}(t).
\end{equation}
This provides a direct link between the RFDE and the quantum system under consideration.

\begin{example}
Let $F\subset \R$ be an $\alpha$-perfect fractal set.
Let us consider the fractal harmonic oscillator potential for the Harmonic Oscillator
\begin{equation}
V(x) = \tfrac{1}{2}m\omega^{2}S_{F}^{\alpha}(x)^{2}, \quad \forall x \in F.
\end{equation}
Note that here $S_{F}^{\alpha}(x)$ denotes the $\alpha$-dimensional fractal coordinate \cite{golmankhaneh2024fractalEE}.

The normalized ground-state wavefunction \cite{merzbacher1998quantum8}, is
\begin{equation}
\psi_{0}(x) \propto \exp\!\left(-\tfrac{m\omega}{2\hbar} S_{F}^{\alpha}(x)^{2}\right).
\end{equation}
From the logarithmic derivative, the superpotential is
\begin{equation}
W(x) = -D_{F}^{\alpha}\ln \psi_{0}(x)
     = \sqrt{\tfrac{m\omega}{2\hbar}}\,S_{F}^{\alpha}(x).
\end{equation}

Substitution into \eqref{FractalRiccati} confirms that this $W(x)$ satisfies the RFDE, showing that the fractal harmonic oscillator is exactly solvable by the factorization method.
\end{example}

\medskip

\begin{example}

Let $F\subset \R$ be an $\alpha$-perfect fractal set.
Let us consider the fractal Coulomb potential for the Hydrogen atom \cite{merzbacher1998quantum8}:
\begin{equation}
V(r) = -\frac{e^{2}}{4\pi \epsilon_{0}} \frac{1}{S_{F}^{\alpha}(r)},  \quad \forall r \in F , \quad r>0.
\end{equation}

In atomic units ($\hbar=m=e^{2}/4\pi\epsilon_{0}=1$), the effective radial equation for $u(r)=S_{F}^{\alpha}(r)R_{n\ell}(r)$ reads
\begin{equation}
-\tfrac{1}{2}D_{F}^{2\alpha} u(r)
+ \left[ \frac{\ell(\ell+1)}{2S_{F}^{\alpha}(r)^{2}} - \frac{1}{S_{F}^{\alpha}(r)} \right]u(r)
= Eu(r).
\end{equation}

The RFDE for the superpotential $W_{\ell}(r)$ is
\begin{equation}\label{CoulombRiccati}
W_{\ell}(r)^{2} - D_{F}^{\alpha} W_{\ell}(r)
= \frac{\ell(\ell+1)}{S_{F}^{\alpha}(r)^{2}} - \frac{2}{S_{F}^{\alpha}(r)} - E_{0}, \quad \forall r \in F.
\end{equation}

For the ground state ($n=1,\ell=0$) \cite{merzbacher1998quantum8}, the radial solution is
\begin{equation}
u_{10}(r) \propto \exp(-S_{F}^{\alpha}(r)),
\end{equation}
giving
\begin{equation}
W_{0}(r) = -D_{F}^{\alpha}\ln u_{10}(r).
\end{equation}

For general $\ell$, one finds
\begin{equation}
W_{\ell}(r) = \frac{\ell+1}{S_{F}^{\alpha}(r)} - \frac{1}{\ell+1},
\end{equation}
which solves \eqref{CoulombRiccati}. The partner potentials are then
\begin{align}
V_{-}(r) &= \frac{\ell(\ell+1)}{S_{F}^{\alpha}(r)^{2}} - \frac{2}{S_{F}^{\alpha}(r)} - \frac{1}{(\ell+1)^{2}}, \\
V_{+}(r) &= \frac{(\ell+1)(\ell+2)}{S_{F}^{\alpha}(r)^{2}} - \frac{2}{S_{F}^{\alpha}(r)} - \frac{1}{(\ell+1)^{2}}.
\end{align}

This demonstrates the \emph{shape invariance} of the Coulomb potential under supersymmetric factorization, with $V_{+}(r)$ corresponding to the effective potential of angular momentum $\ell+1$. The Riccati-type fractal formulation thus provides a powerful algebraic route to the hydrogen atom spectrum.
\end{example}

\section{Conclusion \label{S-6}}

In this paper, the $F^\alpha$-calculus  was studied and utilized to obtain the exact solutions of some non-linear fractal differential equations.
The examples discussed not only demonstrate the effectiveness of the proposed approach but also open new research directions for the application of $F^{\alpha}$-calculus in non-linear differential equations, with potential applications across various scientific and engineering fields, extending the power of fractal methods to more complex non-linear systems.\\
\textbf{Declaration of Competing Interest:}\\
The authors declare that they have no known competing financial interests or personal relationships that could have appeared to influence the work reported in this paper.\\
\textbf{Data Availability Statement:}\\ No data were generated or analyzed during the current study.\\

\bibliographystyle{elsarticle-num}
\bibliography{fadivnonhomo2}

\end{document}